\let\oldtocsection=\tocsection
\let\oldtocsubsection=\tocsubsection
\let\oldtocsubsubsection=\tocsubsubsection
\renewcommand{\tocsection}[2]{\hspace{0em}\oldtocsection{#1}{#2}}
\renewcommand{\tocsubsection}[2]{\hspace{1em}\oldtocsubsection{#1}{#2}}
\renewcommand{\tocsubsubsection}[2]{\hspace{2em}\oldtocsubsubsection{#1}{#2}}
\providecommand{\R}{\mathbb{R}}
\providecommand{\leq}{\leqslant}
\providecommand{\geq}{\geqslant}
\providecommand{\eps}{\varepsilon}
\renewcommand{\div}{\operatorname{div}}
\newcommand{\curl}{\operatorname{curl}}
\newcommand{\dist}{\operatorname{dist}}
\newtheorem{Theorem}{Theorem}
\newtheorem{Lemma}{Lemma}
\newtheorem{Remark}{Remark}
\begin{document}

\date{\today}
\title[A Kato type criterion for vortex sheets] {A Kato type criterion for the zero viscosity limit of the incompressible Navier-Stokes flows with vortex sheets data}

\author{Franck Sueur\footnote{Institut de Math\'ematiques de Bordeaux, 
UMR CNRS 5251, Universit\'e de Bordeaux, Franck.Sueur@math.u-bordeaux.fr}}

\maketitle

\begin{abstract}
There are a few examples of solutions to the incompressible  Euler equations
which are piecewise smooth with a discontinuity of the tangential velocity across a hypersurface evolving in time: the so-called vortex sheets. 
An important open problem is to determine whether or not these  solutions  can be obtained as  zero viscosity limits of  the incompressible Navier-Stokes solutions in the energy space. 
In this  paper we establish a couple of sufficient conditions  similar to the one obtained by  Kato  in   [T.~Kato. \textit{Remarks on zero viscosity limit for nonstationary Navier-Stokes flows with boundary.} Seminar on nonlinear partial differential equations, 85-98, Math. Sci. Res. Inst. Publ., 2,  1984] 
 for  the convergence of Leray solutions  to  the Navier-Stokes equations in a bounded domain with no-slip condition
toward smooth solutions to the Euler equation.
\ \par \ 

2010 Mathematics Subject Classification: 35B25, 35B30, 76D05, 76D10. \ \par \ 
Key words and phrases: Vanishing viscosity, vortex sheets, boundary layer theory.
\end{abstract}

\section{Introduction}
\label{sec-intro} 
In fluid mechanics a vortex sheet is a hypersurface across which the tangential component of the flow velocity is discontinuous while the normal component is continuous. 
Because of the discontinuity in the tangential velocity the vorticity is infinite on the hypersurface hence the terminology. 
Examples of solutions to the incompressible Euler equations for which an initial vortex sheet evolves in a smooth hypersurface for positive times are rare because of the Kelvin-Helmholtz instability. 
Let us mention the examples  provided by \cite{dr,ssbf} for analytic data and by \cite{BardosTiti-PP,DM}  for plane-parallel flows. 
Let us also refer here  to the survey  \cite{BL}  for more on vortex sheets.

An important open problem is to determine whether or not these  solutions  can be obtained as  zero viscosity limits of solutions to the  incompressible  Navier-Stokes equations.
 In particular, Bardos, Titi and Wiedemann have shown in  \cite{BardosTiti-Wiede} that the zero-viscosity limit can serve as a selection principle for the Euler equations in the case  of 
 initial data for which there exist non-unique weak solutions to the incompressible Euler equations satisfying the weak energy inequality,  including a vortex sheet solution with plane-parallel symmetry. 
 They prove that  the latter is the  zero-viscosity limit, in a weak sense, of any sequence of Leray-Hopf solutions to the Navier-Stokes solutions.
 
In general the difficulty to establish the zero-viscosity limit towards vortex sheets 
 is  that the  fluid tangential velocity  has $O(1)$ variation
in a layer containing the hypersurface, similarly to the  boundary layer associated with  the no-slip condition on a fixed wall. 

For the latter a result by Kato   \cite{Tosio} establishes  the convergence of  Leray solutions to the Navier-Stokes equations 
to a smooth Euler solution  in the energy space provided that the energy dissipation rate of the viscous flow in
a boundary layer of width proportional to the viscosity vanishes, with an appropriate condition for the initial data.  
Since then, this result was extended in various ways, see \cite{BardosTiti-turbu,CEIV,CKV,Kelliher-2007,Kelliher-2008,Kelliher-2017,Wang}. Let us also mention here the recent survey \cite{Maekawa-Mazzucato} for more about  the zero viscosity limit of the incompressible Navier-Stokes flows. 

In this paper we extend Kato's result to the case of a vortex sheet by establishing a couple of sufficient conditions for  the convergence of Leray solutions to the Navier-Stokes in the full space to a vortex sheet. 
These two conditions: Condition \eqref{KatoCondition} and Condition \eqref{KatoConditionJump} below, both involve $L^2$ norms of derivatives of the fluid velocity  on a boundary layer of width proportional to the viscosity. 
Indeed  Condition \eqref{KatoCondition}  involves 
the energy dissipation rate of the viscous flow and is therefore very similar to Kato's  condition.  
 On the other hand Condition \eqref{KatoConditionJump} involves the difference of derivatives of the fluid velocity between one side of the hypersurface and the other.

 \section{Setting}
\label{sec-setting} 

Let $d=2$ or $3$. 
We consider  $T>0$  and $\Sigma$  a smooth compact connected hypersurface of $[0,T] \times \R^d$,  given as the zero level set of a signed smooth function $\varphi(t,x)$, such that, in a small neighborhood of $\Sigma$, 
\begin{equation}
\label{cladis} 
{|\varphi(t,x)|} =  \mathrm{dist}(x, \Sigma_t),
\end{equation}
where,  for every time $t$ in $[0,T]$, we denote by $\Sigma_t \subset  \R^d$ the projection of  $\Sigma$ on  $\{ t\} \times \R^d$.
We assume that the two connected components 
 $\Omega_{t,\pm}$ of $\R^d \setminus \Sigma_t $ are given by 
 $$\Omega_{t,\pm}  := \{ x \in  \R^d \ / \,  \pm \varphi(t,x) > 0\} .$$

We denote by $L^2_\sigma {(\R^d)} $  the closure in $L^2{(\R^d)}$ of the space $ \mathcal{C}^\infty_\sigma ([0,T); {\R^d})$ of smooth divergence free 
vector fields and we will use the notation $    \mathcal{C}_w([0,T];  L^2_\sigma {(\R^d)} ) $ for vector fields depending on time continuously on $[0,T]$ 
with respect to the weak topology of $L^2 {(\R^d)} $.

 Let    $u^E$ in  $\mathcal{C}_w([0,T];  L^2_\sigma {(\R^d)} ) $ such that for every time $t$ in $[0,T]$, the restrictions of $ u^E_\pm  (t,\cdot)$ to  $\Omega_{t,\pm}$
 admit some smooth extensions  to $\overline{\Omega_{t,\pm}} $, with  traces  $ u^E_\pm  (t,\cdot)$  on   $\Sigma_t $  satisfying 
\begin{equation}
\label{eikonal}
\partial_t \varphi  +  u^E_+ \cdot \nabla \varphi   = \partial_t \varphi  + u^E_- \cdot \nabla \varphi = 0 .
\end{equation}
Let us precise that in \eqref{eikonal} the notation $\nabla$ refers to the gradient with respect to the space variables only, and it is the same in the sequel. 

We assume that there exists a scalar function $p^E$ which is,  for every time $t$ in $[0,T]$,   smooth in  $\Omega_{t,\pm}$ up to the boundary and continuous at $\Sigma_t$, such that, in $\Omega_{t,\pm}$, 
$$\partial_t  u^E +  \div ( u^E \otimes u^E) + \nabla p^E= 0 .$$

Then for every time $t$ in $[0,T]$, 
\begin{equation} \label{EulerBodyStrongEnergy}
       \| u^E (t) \|_{L^2 (\R^d )} =  \| u^E_0 \|_{L^2 (\R^d )} .
\end{equation}
We say that $u^E$ is a \textit{vortex sheet associated with $\Sigma$}. Observe in particular that  the tangential component of $u^E$ can be  discontinuous across $\Sigma$ while the normal component is continuous,  as a consequence of \eqref{cladis}  and \eqref{eikonal}.
Moreover  $u^E$  is a weak solution to the incompressible Euler equations in $\R^d$.
\begin{Remark}
Observe that we do not use any subscript $E$ for $\Sigma$ and  $\varphi$; the reason is that  there is no counterpart for the Navier-Stokes equations in the sequel so that there is no ambiguity: $\Sigma$ and  $\varphi$ are always associated with the Euler solution. 
\end{Remark}

A natural question is  whether or not a vortex sheet is a limit when  $\eps \rightarrow 0^+$ 
of solutions $u^{\eps} $ to the  incompressible  Navier-Stokes equations. 
\begin{eqnarray}
\label{NSI}
\partial_t  u^{\eps}  +  \div (u^{\eps}  \otimes u^{\eps} ) + \nabla  p^{\eps} = \eps \Delta u^{\eps}  ,\ \div u^{\eps}  = 0 . 
\end{eqnarray}
Here we will consider weak  solution to~\eqref{NSI} in the sense of Leray. 
We  recall  that, for an initial data $u_0$ in  $L^2_\sigma {(\R^d)} $, 
$$u^{\eps}\in  \mathcal{C}_w([0,T];  L^2_\sigma {(\R^d)} ) \cap L^2((0,T); H^1(\R^d))$$
 is a weak Leray
solution to~\eqref{NSI}  associated with   $u_0$ 
if it satisfies, for any $\phi $ in $\mathcal{C}^\infty_\sigma ([0,T], {\R^d})$, 
\begin{gather}
\label{wNS}
     \int_{\R^d}  u_0 \cdot \phi (0,\cdot) \,  dx dt 
     -     \int_{\R^d}  u^{\eps} (T,\cdot) \cdot \phi (T,\cdot) \,  dx dt 
     +   \int_0^T \,   \int_{\R^d}  u^{\eps}  \cdot \partial_t \phi\,  dx dt 
    \\ \nonumber     +  \int_0^T \,   \int_{\R^d}  (u^{\eps}  \cdot \nabla  \phi)  \cdot u^{\eps}  \,  dx dt 
         -  \eps  \int_0^T \,    \int_{\R^d} \nabla u^{\eps}  : \nabla\phi \,  dx dt 
         = 0 ,
\end{gather} 
 and the strong energy inequality: for almost every $0 \leq \tau < t \leq T$, 
\begin{equation} \label{NSBodyWeakEnergy}
    \begin{split}
     \frac12   \| u^{\eps}  (t) \|_{L^2{(\R^d)}}^2
        +   \eps \iint_{(\tau, t) \times \R^d} | \nabla u^{\eps} |^2 \,  dx dt 
         \leq   \frac12 \| u^{\eps} (\tau) \|_{L^2{(\R^d)}}^2 .
    \end{split}
\end{equation}
%
%

\section{Main result}
\label{sec-MR} 
In this section we state the main result of this paper: Theorem \ref{ThKato}  below.  First we introduce some notations which appear in the statement of this result. 
Let us recall that we consider  $d=2$ or $3$,  $T>0$  and $\Sigma$  a smooth compact connected hypersurface of $(0,T) \times \R^d$.
For any $c > 0$,  for every time $t$ in $[0,T]$,  we set  
\begin{equation*}
\mathcal{V}_{t,c} := \{  x \in  \R^d / \dist (x,\Sigma_t ) <  c \}  .
\end{equation*}
For every $t$ in $[0,T]$, for $c> 0$ small enough, 
the reflexion across $\Sigma_t$ is well-defined on the set of the functions $f$ whose restrictions to ${\Omega_{t,\pm}} $
admit smooth extensions  to $\overline{\Omega_{t,\pm}} $. 
This map associates with $f$ a function $\tilde{f}$ defined as follows.
If for some time $t$, we  denote by  $s$ the tangential coordinates so that $(s,\varphi)$ are local coordinates  then  the function $\tilde{f}$ is given explicitly by $\tilde{f} (t,s,\varphi) = f (t,s,-\varphi)$.

With $f$  we associate  the function 
\begin{equation}
\label{reflexion}
[f]  := f  - \tilde{f}, 
\end{equation}
which is loosely speaking, at $(t,x)$,  the jump of $f$ across $\Sigma_t$ at distance $2{|\varphi(t,x)|}$.

We can now state the main result of the paper. 

\begin{Theorem}
\label{ThKato}
Let $d=2$ or $3$,  $T>0$  and $\Sigma$  a smooth compact connected hypersurface of $(0,T) \times \R^d$.
Let $u^E$  a vortex sheet associated with $\Sigma $.
Let $(u^{\eps}_{0} )$ a family, indexed by $\eps \in (0,1)$, in  $L^2_\sigma {(\R^d)} $ converging to  $ u^{E}_{0} $.
For every $\eps $ in $(0,1)$, we  consider $u^{\eps}$ a  weak Leray solution  associated with $u^{\eps}_{0}$.
Assume that there exists $c>0$ such that 
 \begin{equation}
\label{KatoCondition}  \eps  \int_{(0,T)}  \int_{ \mathcal{V}_{t,c\eps} }   |  \nabla u^{\eps}  |^{2} \,  dx dt \rightarrow  0 , \text{ when }\eps  \rightarrow  0 ,
\end{equation}
and that 
 \begin{equation}
\label{KatoConditionJump}    \int_{(0,T)}  \int_{ \mathcal{V}_{t,c\eps} \cap \Omega_{t,+} }   | [ \nabla u^{\eps}]  |^{2} \,  dx dt \rightarrow  0 , \text{ when }\eps  \rightarrow  0 .
\end{equation}
Then 
\begin{equation}
\label{Convergence}  \sup_{ (0,T)} \, \int_{\R^2} | u^{\eps}- u^{E} |^2 \, dx   \rightarrow  0  \text{ when }\eps  \rightarrow  0 ,
\end{equation}
\end{Theorem}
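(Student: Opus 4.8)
The plan is to run a relative-energy (modulated energy) argument à la Kato, comparing the Leray solution $u^\eps$ with the vortex sheet $u^E$, but with the crucial new feature that the test vector field approximating $u^E$ near $\Sigma$ is built so as to exploit \emph{both} the smallness of the dissipation in the layer, Condition \eqref{KatoCondition}, \emph{and} the smallness of the jump of $\nabla u^\eps$ across $\Sigma$, Condition \eqref{KatoConditionJump}. Concretely, I would first construct a divergence-free corrector $v^\eps$ which coincides with $u^E$ outside the $O(\eps)$ layer $\mathcal{V}_{t,c\eps}$, is continuous (and in $H^1$) across $\Sigma_t$, and interpolates the two traces $u^E_\pm$ of the vortex sheet inside the layer; the natural device is a cutoff $\chi(\varphi(t,x)/(c\eps))$ together with the reflection map $f\mapsto\tilde f$ from \eqref{reflexion}, so that $v^\eps$ is, roughly, the average $\tfrac12(u^E_+ + u^E_-)$ (plus an odd-in-$\varphi$ piece cut off at scale $c\eps$) near the sheet. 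The divergence-free constraint is restored by the standard Bogovskii-type correction, whose cost is controlled because $u^E_+\cdot\nabla\varphi = u^E_-\cdot\nabla\varphi$ by \eqref{eikonal}, i.e.\ the normal components match. One records the key bounds: $v^\eps \to u^E$ in $\calC([0,T];L^2)$, $v^\eps$ is supported-difference in a layer of measure $O(\eps)$, $\|\nabla v^\eps\|_{L^2}=O(\eps^{-1/2})$ and, more importantly, $\|\partial_t v^\eps\|$ and $\|v^\eps\cdot\nabla v^\eps\|$ are integrable against $u^\eps$ in a way that produces only $o(1)$ after using the layer geometry and the eikonal equation.

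Next I would write the energy balance for $w^\eps := u^\eps - v^\eps$. Using \eqref{wNS} with $\phi$ regularized versions of $v^\eps$, the strong energy inequality \eqref{NSBodyWeakEnergy} for $u^\eps$, and the exact energy identity \eqref{EulerBodyStrongEnergy} for $u^E$ (together with $v^\eps\to u^E$), one arrives at an inequality of the schematic form
\begin{equation*}
\tfrac12\|w^\eps(t)\|_{L^2}^2 + \eps\!\int_0^t\!\!\int_{\R^d}|\nabla w^\eps|^2
\le \tfrac12\|w^\eps(0)\|_{L^2}^2 + \int_0^t\!\!\int_{\R^d}\big(\text{linear and bilinear error terms}\big) + \eps\!\int_0^t\!\!\int_{\R^d}\nabla u^\eps:\nabla v^\eps.
\end{equation*}
The term $\eps\int\nabla u^\eps:\nabla v^\eps$ is where Kato's idea enters: since $\nabla v^\eps$ is supported in $\mathcal{V}_{t,c\eps}$, Cauchy–Schwarz gives $\eps\,\|\nabla u^\eps\|_{L^2(\mathcal{V}_{t,c\eps})}\,\|\nabla v^\eps\|_{L^2(\mathcal{V}_{t,c\eps})} \le \big(\eps\|\nabla u^\eps\|^2_{L^2(\mathcal{V}_{t,c\eps})}\big)^{1/2}\cdot\big(\eps\|\nabla v^\eps\|^2\big)^{1/2}$, and the first factor is $o(1)$ by \eqref{KatoCondition} while the second is $O(1)$ since $\|\nabla v^\eps\|_{L^2}^2=O(1/\eps)\times O(\eps)=O(1)$. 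The bilinear transport term $\int (w^\eps\cdot\nabla v^\eps)\cdot w^\eps$ is the genuinely delicate one: away from the layer $\nabla v^\eps=\nabla u^E$ is bounded and gives a Grönwall-type contribution $\int\|w^\eps\|_{L^2}^2$; inside the layer one must not pay the $\eps^{-1}$ of $\nabla v^\eps$. This is handled exactly by a Hardy-type inequality together with \eqref{KatoConditionJump}: the component of $\nabla v^\eps$ that is $O(\eps^{-1})$ in the layer is the normal derivative of the odd part, and writing $w^\eps = (u^\eps - \widetilde{u^\eps}) + (\widetilde{u^\eps} - v^\eps)$ near $\Sigma$ one trades this against $\eps^{-1}\int_{\mathcal V_{t,c\eps}} |u^\eps - \widetilde{u^\eps}|^2$, which by a one-dimensional Hardy inequality in the $\varphi$ variable is bounded by $\eps\int_{\mathcal V_{t,c\eps}\cap\Omega_{t,+}}|[\nabla u^\eps]|^2 = o(1)/\!\cdot$, i.e.\ precisely Condition \eqref{KatoConditionJump} absorbs the dangerous part (the remaining piece $\widetilde{u^\eps}-v^\eps$ is small because $v^\eps$ is the reflected-average construction).

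Finally I would collect the estimates: all error terms are shown to be either $o(1)$ as $\eps\to0$, or of the form $C\int_0^t\|w^\eps(s)\|_{L^2}^2\,ds$, plus the initial term $\|w^\eps(0)\|_{L^2}^2=\|u_0^\eps - v^\eps(0)\|_{L^2}^2\to0$ by hypothesis. Grönwall's lemma then yields $\sup_{(0,T)}\|w^\eps(t)\|_{L^2}^2\to0$, and since $\|u^\eps - u^E\|_{L^2}\le\|w^\eps\|_{L^2}+\|v^\eps - u^E\|_{L^2}$ with the second term $\to0$ uniformly in $t$, we obtain \eqref{Convergence}. I expect the main obstacle to be the construction of the corrector $v^\eps$ and the attendant commutator/curvature estimates near $\Sigma_t$: one must simultaneously keep $v^\eps$ divergence-free, keep its $\eps^{-1}$-sized gradient confined to the \emph{normal} derivative of an \emph{odd} profile (so that Hardy + \eqref{KatoConditionJump} applies), and control the time-derivative $\partial_t v^\eps$ using only the eikonal relation \eqref{eikonal}, all in moving, curved local coordinates $(s,\varphi)$ whose Jacobian and Christoffel-type terms contribute lower-order but nonzero corrections that must be tracked carefully. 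The bilinear term's treatment inside the layer, via the decomposition through the reflection $\widetilde{u^\eps}$, is the conceptual heart and where Condition \eqref{KatoConditionJump} is essential — it is what has no analogue in Kato's fixed-boundary setting, where the no-slip condition plays the role that the jump condition plays here.
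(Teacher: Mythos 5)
Your overall architecture (a divergence-free corrector supported in the $O(\eps)$ layer, built from the reflection $f\mapsto\tilde f$ so that the combined field is continuous across $\Sigma_t$, followed by a relative-energy/Gr\"onwall argument) is indeed the strategy of the paper. However, the step you yourself call the conceptual heart --- the bilinear term inside the layer --- contains a genuine gap. You keep the term $\int (w^\eps\cdot\nabla v^\eps)\cdot w^\eps$ with $\nabla v^\eps=O(\eps^{-1})$ in the layer and propose to beat the $\eps^{-1}$ by writing $w^\eps=(u^\eps-\widetilde{u^\eps})+(\widetilde{u^\eps}-v^\eps)$. Neither piece closes. For the first, Hardy applied to $u^\eps-\widetilde{u^\eps}$ (which does vanish on $\Sigma_t$) controls $\|(u^\eps-\widetilde{u^\eps})/\varphi\|_{L^2}$ by $\|\nabla u^\eps\|_{L^2(\mathcal{V}_{t,c\eps})}$, \emph{not} by $\|[\nabla u^\eps]\|$: since $\widetilde{u}(s,\varphi)=u(s,-\varphi)$, one has $\partial_\varphi(u^\eps-\widetilde{u^\eps})=\partial_\varphi u^\eps+\widetilde{\partial_\varphi u^\eps}$, the \emph{sum} of the reflected normal derivatives, whereas $[\nabla u^\eps]$ is their difference; so the quantity produced is the one of Condition \eqref{KatoCondition}, and your assertion that it equals ``$\eps\int|[\nabla u^\eps]|^2$, i.e.\ precisely Condition \eqref{KatoConditionJump}'' is also off by the fact that \eqref{KatoConditionJump} carries no factor $\eps$. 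Worse, the resulting bound is $\|\nabla u^\eps\|_{L^2(\mathcal{V}_{t,c\eps})}\,\|w^\eps\|_{L^2}$, which is only $o(\eps^{-1/2})\|w^\eps\|_{L^2}$ and cannot be absorbed by Gr\"onwall (Young's inequality leaves an $\eps^{-1}\|w^\eps\|^2$ coefficient). For the second piece, $\|\widetilde{u^\eps}-v^\eps\|_{L^2(\mathcal{V}_{t,c\eps})}\lesssim\|u^\eps-u^E\|_{L^2}+\eps^{1/2}$ and nothing better, so pairing it with the $\eps^{-1}$ gradient produces an $\eps^{-1}\|u^\eps-u^E\|_{L^2}^2$ term, fatal for Gr\"onwall. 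As written, Condition \eqref{KatoConditionJump} is never genuinely used and the dangerous quadratic-in-error contribution in the layer is not controlled.

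The paper's way out is to never let the corrector be differentiated in the convective term: in Lemma \ref{lemmaGG}, using the divergence-free structure and the continuity of the normal component of $u^\eps-u^E$ across $\Sigma_t$, one integrates by parts first (identity \eqref{pourFsi}) so that the convective remainder is $R_{(iv)}=-\sum_\pm\int_{\Omega_{t,\pm}} v^\eps\cdot((u^\eps-u^E)\cdot\nabla u^\eps)\,dx$, with $v^\eps$ undifferentiated. Then the reflection change of variables together with the pointwise bounds of Lemma \ref{LemmaFake} --- $v^\eps=O(1)$, $\varphi v^\eps=O(\eps)$ (\eqref{FakeImp}) and the almost-oddness $v^\eps+\widetilde{v^\eps}=O(\eps)$ (\eqref{symm}) --- splits $R_{(iv)}$ into four pieces matched exactly to the available information: a $[u^\eps]$ piece handled by Hardy and \eqref{FakeImp}, giving $\eps\|\nabla u^\eps\|^2_{L^2(\mathcal{V}_{t,c\eps})}$ (Condition \eqref{KatoCondition}); a $[u^E]$ piece of size $\eps^{1/2}\|\nabla u^\eps\|_{L^2(\mathcal{V}_{t,c\eps})}$; the piece $\int_{\Omega_{t,+}} v^\eps\cdot((\widetilde{u^\eps}-\widetilde{u^E})\cdot[\nabla u^\eps])\,dx$, bounded by $\|u^\eps-u^E\|_{L^2}\|[\nabla u^\eps]\|_{L^2(\mathcal{V}_{t,c\eps}\cap\Omega_{t,+})}$, which is the one and only place where Condition \eqref{KatoConditionJump} (with no $\eps$ prefactor) is required; and an $O(\eps)$ piece coming from \eqref{symm}. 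Two further remarks: the divergence-free constraint is obtained for free in the paper by defining $v^\eps$ as $\nabla^\perp$ (resp.\ $\curl$) of a truncated stream function, see \eqref{2D}--\eqref{3D}, so no Bogovskii correction is needed; and no continuity of $\nabla(u^E+v^\eps)$ across $\Sigma$ is imposed --- the diffusive terms are estimated without integrating by parts, which is precisely what frees the degree of freedom used to impose the almost-oddness \eqref{symm} that your argument also needs.
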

The proof of Theorem \ref{ThKato} is displayed in three parts corresponding respectively to Sections \ref{preuve1},  \ref{Preuve2} and \ref{Preuve3}. 
%
\section{A few comments}
\begin{itemize}
\item  Condition \eqref{KatoCondition}  is similar to Kato's original condition for the case of boundary layers attached to a fixed rigid wall, cf.  \cite{Tosio}. 
 On the other hand Condition \eqref{KatoConditionJump}, 
at first look, seems quite a strong extra assumption since there is no factor  $\eps  $ in front of the integral. 
However such an assumption is not that bad because of the regularizing effect of the Navier-Stokes equations. 
Indeed if the solution $u^{\eps}$ is smooth then 
 for every time $t$  the trace of  $[ \nabla u^{\eps}]$ on $\Sigma_t$ is well-defined, vanishes and Hardy's inequality can be applied, so that 
  \eqref{KatoConditionJump}  follows from the following condition: 
\begin{equation}
\label{CCC}   \eps^2  \int_{(0,T)}  \int_{ \mathcal{V}_{t,c\eps} }   |  \Delta u^{\eps}  |^{2} \,  dx dt   \rightarrow  0  \text{ when }\eps  \rightarrow  0 ;
\end{equation}
a condition which scales with the energy bound deduced from \eqref{NSBodyWeakEnergy} 
and Condition \eqref{KatoCondition}.
\ \par \ 

\item For shear flows,  the Navier-Stokes solutions have variations in $\varphi / \sqrt{\eps}$ so that Condition \eqref{KatoCondition} and Condition \eqref{KatoConditionJump}  are of course satisfied. 
 Indeed our  proof of Theorem \ref{ThKato}, following Kato's approach in  \cite{Tosio}, 
  involves a fake layer with  variations in $\varphi / \eps$, see \eqref{2D}  and \eqref{3D}. 
For more general flows, ansatz with  variations in $ \varphi / \sqrt{\eps}$ lead to Prandtl-type equations,  
see  \cite{Benedetto-Pulvirenti}  and  \cite{C-S}. 
 \ \par \ 

\item  In  \cite{sueur2012} we  consider the motion of a rigid body in an incompressible fluid occupying the complementary set in the space, with a no-slip condition at the interface, and we prove that a Kato type condition implies the convergence of both fluid  and body velocities. 
In this  paper we extend these results to the case of a vortex sheet,  that is to a fluid  interface with a more evolved dynamics. Indeed boundary layers associated with the no-slip condition on a fixed wall 
can be also viewed as vortex sheets with fixed support, see for instance \cite{CK,Kelliher-2017}. 
Loosely speaking Theorem \ref{ThKato} seems to indicate that despite that the dynamics of a vortex sheet is more subtle, the scale which is of interest for the inviscid limit is perhaps not worse than in the case of no-slip boundary layers. 
\ \par \ 

\item   In the case of the convergence of solutions  to  the incompressible Navier-Stokes solutions in a bounded domain with no-slip condition to smooth solutions to the incompressible Euler solution in the zero viscosity limit, 
  in addition to Kato's criterion, another criterion is given by Bardos and Titi in  \cite[Section 4.4]{BardosTiti-turbu}, see also \cite[Section 8 and 10] {Kelliher-2017}.  It involves only the behaviour of the  Navier-Stokes solutions on the boundary in the zero viscosity limit and the proof relies on 
  Kato's construction. 
  This criterion can be adapted to the present setting as a condition on the interface $\Sigma$ by substituting Lemma \ref{LemmaFake} 
  below instead of  Kato's construction.
  Indeed, by a direct energy estimate it is not difficult to see that the convergence  \eqref{Convergence}  holds if and only 
\begin{equation}
\label{BT-VS} 
\eps    \int_\Sigma   \det (\nabla \varphi,\curl  u^{\eps} ,[ u^E ]) \,  d\sigma  \rightarrow 0 , \text{ when }\eps  \rightarrow  0 ,
  \end{equation}
  where $\sigma$ is the surface measure on $\Sigma$. (In particular, to prove the direct part, we use that 
  $$ \eps \iint_{(0, T) \times \R^d} | \nabla u^{\eps} |^2 \,  dx dt  \rightarrow 0,  \text{ when }\eps  \rightarrow  0 ,$$
  as a consequence of \eqref{EulerBodyStrongEnergy}, \eqref{NSBodyWeakEnergy}, \eqref{Convergence} and of the convergence of the initial data).
  Moreover, under the assumptions of Theorem \ref{ThKato},
   for any   $\Psi$ in $C^\infty_0 (\Sigma ; \R^d) $ tangent to $\Sigma$, 
\begin{equation}
\label{BT-VS2} 
\eps  \int_\Sigma \curl  u^{\eps}  \cdot \Psi \rightarrow 0  \text{ when }\eps  \rightarrow  0   . 
  \end{equation}
 Since the proof of \eqref{BT-VS2}  can be easily adapted from  \cite[Section 4.4]{BardosTiti-turbu} and from the analysis performed  in the course of the proof of Theorem \ref{ThKato} (in particular by using  Lemma \ref{LemmaFake} below with   $\Psi$ instead of $[ u^E ]$ and following the treatment done in  Section \ref{Preuve3}  of the terms denoted by $R_{(iii)} $,    $R_{(iv)} $ and  $R_{(v)}$ in  Lemma \ref{lemmaGG}),
the details are left to the reader.
\ \par \ 

\item Theorem \ref{ThKato} proves that the conditions  \eqref{KatoCondition} and \eqref{KatoConditionJump} are sufficient for the convergence \eqref{Convergence}.  
The converse statement is an open question. 
Another open question is whether or not the convergence \eqref{Convergence} implies the interface condition  \eqref{BT-VS2}. 
To contrast with the classical setting, let us recall that Kato's condition and  Bardos and Titi's condition are proved to be sufficient  and necessary, respectively in  \cite{Tosio}  and \cite[Section 4.4]{BardosTiti-turbu}.
\ \par \ 

\item  As mentioned in Section \ref{sec-intro},   in the case of the convergence of solutions  to  the incompressible Navier-Stokes solutions in a bounded domain with no-slip condition to smooth solutions to the incompressible Euler solution in the zero viscosity limit,   some other variants of  Kato's criterion have been found, see 
 \cite{CEIV,CKV,Kelliher-2007,Kelliher-2008,Kelliher-2017,Wang}. Similar extensions of Theorem \ref{ThKato}
 can be obtained with minor modifications. 
 \ \par \ 
 
\item  We hope to extend our analysis to the case of compressible flows. Let us recall that  Coulombel and Secchi prove, in   \cite{cs} and \cite{cs-u}, the existence and uniqueness of  supersonic  compressible vortex sheets in two space dimensions. 
Therefore it would be interesting to obtain some sufficient conditions for the convergence of  solutions  to the compressible Navier-Stokes equations to these solutions in the zero-viscosity limit. 
In this direction let us mention that the case of  the inviscid limit of the compressible Navier-Stokes equations in a bounded domain, with the no-slip condition (and also in the case of the Navier slip-with-friction conditions), was tackled in \cite{sueur2014}.
\end{itemize}
%
\section{Energy estimate with an abstract corrector}
\label{preuve1}

Following Kato's approach, see \cite{Tosio}, we first observe that a corrector may help to deduce a $L^2$ stability estimate. 
For sake of notations, we  write temporarily $ u_{0} $ rather than  $ u^{\eps}_{0} $  and similarly, for the initial data,  $u$ rather than $ u^{\eps}$. 
Moreover the estimate involves an abstract corrector $v$ which will be chosen dependent on $\eps$ in the next sections. 
\begin{Lemma}
\label{lemmaGG}
If   $ v$  is such that 
 $u^E  + v  $  can be taken as a test function $\phi$ in \eqref{wNS} then 
\begin{gather} \nonumber
 \frac12 \|  u(t,\cdot) -  u^E  (t, \cdot)   \|_{L^2{(\R^d)}}^{2}   \leq  \frac12 \| u_{0} - u^E_{0}  \|_{L^2{(\R^d)}}^{2} 
 +  (u,   v )_{L^2 {(\R^d)}}  -  (u_{0},   v  |_{t=0})_{L^2{(\R^d)}}  
\\  \label{square2}  -  \int_{0}^{t}  R(s) ds       , 
\end{gather}
where
\begin{gather}
\label{FinalR}
 R(t) =    R_{(i)} (t) + \ldots +  R_{(v)} (t) ,
 \end{gather}
 with 
\begin{align*}
R_{(i)} &:=   \sum_\pm \,  \int_{\Omega_{t,\pm} }  ((u-u^E ) \cdot \nabla u^E  )   \cdot (u- u^E)   dx , \
 \\ R_{(ii)} &:= -  \eps    \sum_\pm \,    \int_{\Omega_{t,\pm}  }   \nabla u :  \nabla  u^E   dx , \
 \\  R_{(iii)} &:=  \sum_\pm \,  \int_{\Omega_{t,\pm} }  u \cdot  \big(  \partial_{t}  v    +   (u^E \cdot \nabla  v ) \big) dx , \ 
 \\  R_{(iv)} &:=   -   \sum_\pm \, \int_{\Omega_{t,\pm} }   v  \cdot   ((u-u^E) \cdot \nabla u ) dx , \, \
 \end{align*}
and
\begin{align*}  R_{(v)} &:=  -  \eps  \sum_\pm \,      \int_{\Omega_{t,\pm}  }   \nabla u :  \nabla v    dx .
\end{align*}
\end{Lemma}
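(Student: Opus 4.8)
The plan is to mimic, in this two–phase setting, the energy computation behind Kato's criterion. The three inputs are: the weak energy inequality \eqref{NSBodyWeakEnergy} for $u := u^\eps$ on $(0,t)$; the energy identity \eqref{EulerBodyStrongEnergy} for the vortex sheet; and the weak formulation \eqref{wNS} of Navier--Stokes with the test field $\phi := u^E + v$, which is admissible by the hypothesis of the lemma. (A minor point: \eqref{wNS} is written with $T$; one uses that a Leray solution satisfies it verbatim with $T$ replaced by any intermediate time $t$.) I would start from
\[
\frac12 \| u(t) - u^E(t) \|_{L^2(\R^d)}^2 = \frac12 \| u(t) \|_{L^2(\R^d)}^2 - (u(t),u^E(t))_{L^2(\R^d)} + \frac12 \| u^E(t) \|_{L^2(\R^d)}^2 ,
\]
bound the first summand from above via \eqref{NSBodyWeakEnergy}, replace the last one by \eqref{EulerBodyStrongEnergy}, and write $(u(t),u^E(t))_{L^2} = (u(t),\phi(t))_{L^2} - (u(t),v(t))_{L^2}$, expressing $(u(t),\phi(t))_{L^2}$ through \eqref{wNS} on $(0,t)$. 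After gathering the data terms into $\frac12 \| u_0 - u^E_0 \|_{L^2}^2$, this produces at once an estimate of the shape \eqref{square2} whose integrand is a single expression built from $\int u \cdot \partial_t \phi$, $\int (u \cdot \nabla \phi) \cdot u$, and the viscous contributions coming from \eqref{NSBodyWeakEnergy} and \eqref{wNS}.

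It then remains to recognise this integrand as $R = R_{(i)} + \ldots + R_{(v)}$. Splitting $\phi = u^E + v$ everywhere, the pieces depending on $v$ alone give $R_{(v)}$ and, after adding and subtracting $(u^E \cdot \nabla)v$, the term $R_{(iii)}$. For the pieces built from $u^E$, I would replace $\partial_t u^E$, inside each $\Omega_{t,\pm}$ separately, by $- \div(u^E \otimes u^E) - \nabla p^E = -(u^E \cdot \nabla)u^E - \nabla p^E$ (using $\div u^E = 0$); the $\nabla p^E$ contribution integrates to $0$ since $u^\eps$ has a single-valued trace on $\Sigma_t$, $p^E$ is continuous there, and the outer normals of $\Omega_{t,+}$ and $\Omega_{t,-}$ are opposite. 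The remaining cubic terms in $(u,u^E)$ regroup, via the pointwise identity
\[
(u \cdot \nabla u^E) \cdot u - \big( (u-u^E) \cdot \nabla u^E \big) \cdot (u - u^E) = (u \cdot \nabla u^E) \cdot u^E + (u^E \cdot \nabla u^E) \cdot u - (u^E \cdot \nabla u^E) \cdot u^E ,
\]
into $R_{(i)}$ plus terms that one integrates by parts over $\Omega_{t,\pm}$ using $\div u = \div u^E = 0$; the same device turns the mixed $(u,v)$ quadratic terms into $R_{(iv)}$, and the viscous pieces assemble into $R_{(ii)}$ and $R_{(v)}$.

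I expect the heart of the matter to be the last step: collecting all the boundary integrals over $\Sigma_t$ produced by these integrations by parts and checking that they cancel. This relies on precisely (i) that $u^\eps(t) \in H^1(\R^d)$ has a single-valued trace on $\Sigma_t$; (ii) that the normal component of $u^E$ is continuous across $\Sigma_t$; (iii) that $[u^E] + [v] = [\phi] = 0$, so $[u^E]$ is tangential; and (iv) the eikonal relation \eqref{eikonal}, which prescribes $u^E \cdot \nabla \varphi$ on $\Sigma_t$ and hence the normal speed of the moving interface. Keeping track of the two opposite orientations and of the motion of $\Sigma_t$ is where all the care goes; everything else is routine algebra, and the available regularity --- $u^\eps \in L^2((0,T);H^1(\R^d))$, with $u^E$ and $p^E$ smooth in $\overline{\Omega_{t,\pm}}$ up to the boundary --- amply justifies each manipulation.
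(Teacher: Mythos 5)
Your proposal follows essentially the same route as the paper's proof: the same three inputs (\eqref{NSBodyWeakEnergy}, \eqref{EulerBodyStrongEnergy}, and \eqref{wNS} tested with $\phi = u^E + v$ up to time $t$), the same splitting of the resulting integrand according to $\phi = u^E + v$, the same use of the Euler equation and of the continuity of $p^E$ in each $\Omega_{t,\pm}$, and the same regrouping into $R_{(i)},\dots,R_{(v)}$. The interface bookkeeping you defer is exactly what the paper records as \eqref{decompo}, \eqref{deo} and \eqref{pourFsi}, justified there by the continuity of $p^E$ and of the normal component of $u - u^E$ across $\Sigma_t$ together with $\div u = \div u^E = 0$ and the single-valued trace of $u$; note that no transport argument for the moving interface (nor \eqref{eikonal} beyond its role in making $u^E\cdot\nabla\varphi$ continuous) is needed at this stage, since all integrations by parts are purely spatial at fixed time.
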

\begin{proof}
For any $t \in [0,T ]$, we have, thanks to  \eqref{EulerBodyStrongEnergy} and \eqref{NSBodyWeakEnergy},
\begin{equation}
\label{square} 
 \|  u(t,\cdot) -  u^E  (t, \cdot)   \|_{L^2{(\R^d)}}^{2}   \leq \| u_{0}  \|_{L^2{(\R^d)}}^{2} + \|  u^E_{0}  \|_{L^2{(\R^d)}}^{2}  - 2 (u ,   u^E   )_{L^2{(\R^d)}}  (t) .
\end{equation}
By assumption, we can take  $\phi = u^E  + v  $ as a test function in \eqref{wNS}
 so that 
\begin{equation}
\label{WeakNSappl2}
(u,  u^E  + v )_{L^2{(\R^d)}}  (t) =  (u_{0},  u^E_{0}  + v  |_{t=0})_{L^2{(\R^d)}}     + \int_{0}^{t}  R(s) ds .
\end{equation}
where
\begin{gather}
\nonumber 
R(t) :=     \int_{\R^d}  u \cdot  \partial_{t}  ( u^E  + v )    dx  + 
   \int_{\R^d}  (u \cdot \nabla (u^E  + v)   \cdot u    dx
\\ \label{Monday} -  \eps      \int_{\R^d }   \nabla u :  \nabla(  u^E  + v )   dx   . 
\end{gather}

Combining \eqref{square}  with  \eqref{WeakNSappl2}  we obtain \eqref{square2} with $R(t) $ given by \eqref{Monday}.
Thus it only remains to prove that  \eqref{Monday} can be translated into \eqref{FinalR}. 
To do so let us split $R(t)$ into 
\begin{gather}
\label{decR}
R(t) = R_{E,+}  (t) + R_{E,-}  (t) + R_{F,+}  (t) + R_{F,-}  (t) ,
\end{gather}
 where 
\begin{gather*}
R_{E,\pm}  :=  \int_{\Omega_{t,\pm} }  u \cdot  \partial_{t}   u^E     dx + 
   \int_{\Omega_{t,\pm} }  (u \cdot \nabla u^E  )   \cdot u    dx
-  \eps      \int_{\Omega_{t,\pm}  }   \nabla u :  \nabla  u^E   dx   ,
\\  R_{F,\pm}  :=  \int_{\Omega_{t,\pm} }  u \cdot  \partial_{t}  v      dx  + 
   \int_{\Omega_{t,\pm} }  (u \cdot \nabla  v )  \cdot u    dx
-  \eps      \int_{\Omega_{t,\pm}  }   \nabla u :  \nabla v    dx  .
\end{gather*}
We first use that 
\begin{gather*}
  \int_{\Omega_{t,\pm} }  (u \cdot \nabla u^E  )   \cdot u   dx   = 
  \int_{\Omega_{t,\pm} }  (u^E \cdot \nabla u^E  )   \cdot u   dx  +  \int_{\Omega_{t,\pm} }  ((u-u^E ) \cdot \nabla u^E  )   \cdot u   dx
\end{gather*}
and the continuity of the normal component of   $ u -u^E $ at the interface to deduce that 
$$   \sum_\pm \,  \int_{\Omega_{t,\pm} }  ((u-u^E ) \cdot \nabla u^E  )   \cdot u^E dx = 0 .$$
Therefore 
\begin{gather}
\label{decompo} 
 \sum_\pm \,  \int_{\Omega_{t,\pm} }  (u \cdot \nabla u^E  )   \cdot u   dx   = 
  \sum_\pm \,  \int_{\Omega_{t,\pm} }  (u^E \cdot \nabla u^E  )   \cdot u   dx 
  \\  \nonumber +   \sum_\pm \,  \int_{\Omega_{t,\pm} }  ((u-u^E ) \cdot \nabla u^E  )   \cdot (u- u^E)   dx
\end{gather}
Moreover, since $u^E $ satisfies  the Euler equation in a strong sense in  both $\Omega_{t,\pm} $ and $u$ is divergence free and continuous at the interface, we obtain, upon an integration by parts, the following identity: 
\begin{gather}
\label{deo} 
 \sum_\pm \,  \int_{\Omega_{t,\pm} }  u \cdot  \partial_{t}   u^E    dx  = -   \sum_\pm \,  \int_{\Omega_{t,\pm} }  (u^E \cdot \nabla u^E  )   \cdot u   dx   .
 \end{gather}
Adding $R_{E,+}$ and $R_{E,-}$ and using 
 \eqref{decompo} and  \eqref{deo}   we arrive at 
\begin{gather}
\label{pourE} 
  \sum_\pm \,  R_{E,\pm}   =
     \sum_\pm \,  \int_{\Omega_{t,\pm} }  ((u-u^E ) \cdot \nabla u^E  )   \cdot (u- u^E)   dx
-  \eps      \int_{\Omega_{t,\pm}  }   \nabla u :  \nabla  u^E   dx   .
\end{gather}

On the other hand, 
\begin{gather}
\nonumber R_{F,\pm}  =  \int_{\Omega_{t,\pm} }  u \cdot  \big(  \partial_{t}  v    +   (u^E \cdot \nabla  v ) \big)    dx
 +  \int_{\Omega_{t,\pm} }  u \cdot   ((u-u^E) \cdot \nabla  v )    dx
\\ \label{pourF} -  \eps      \int_{\Omega_{t,\pm}  }   \nabla u :  \nabla v    dx  .
\end{gather}

Moreover using once again the continuity of  the normal component of   $u -u^E $ at the interface we arrive at 
\begin{gather}
\label{pourFsi} 
  \sum_\pm \, \int_{\Omega_{t,\pm} }  u \cdot   ((u-u^E) \cdot \nabla  v )    dx 
 = -   \sum_\pm \, \int_{\Omega_{t,\pm} }   v  \cdot   ((u-u^E) \cdot \nabla u )    dx .
\end{gather}

 Thus combining
   \eqref{decR}, \eqref{pourE}, \eqref{pourF} and \eqref{pourFsi}  
 we arrive at \eqref{FinalR} and the proof of Lemma \ref{lemmaGG} is completed. 
 \end{proof}

\section{Construction of an almost odd transition layer}
\label{Preuve2} 

In the following result, we make use of the Landau notations $o(1)$ and $O(1)$ for quantities respectively converging to $0$ and bounded with respect to the limit $\eps \rightarrow 0^{+}$.
Let $c>0$  such that \eqref{KatoCondition}  and  \eqref{KatoConditionJump} are satisfied. 
Recall, for a function $f$,  the notation  $\tilde{f}$ in the beginning of Section \ref{sec-MR}.
\begin{Lemma}
\label{LemmaFake}
There exists a family $(v^\eps)$, indexed by $\eps $ in $(0,1)$, in  $C ( [0,T ]  ;  L^2_\sigma {(\R^d)})$ with the following properties: for every  $\eps $ in $(0,1)$, 
\begin{gather}
 \label{support}
 \text{for every } t \in  [0,T ], \quad \text{supp } v^\eps (t,\cdot) \subset  \mathcal{V}_{t,c\eps} , 
 \\  \label{Fake7}
u^E  + v^\eps  \in C ( [0,T ]  ;  L^2_\sigma{(\R^d)}) \cap  L^2 ( [0,T ] ;  H^1 {(\R^d)} ) ,
\end{gather}
such that
\begin{gather}
 \label{Fake0}
v^\eps = O( 1 ) \text{ in }   L^{\infty} ( [0,T ]  \times \R^{3} ),
\\ \label{Fake1}
v^\eps = O( \varepsilon^{\frac{1}{2}}) \text{ in }C ( [0,T ]  ;  L^2{(\R^d)}),
\\   \label{FakeImp}
\varphi v^\eps = O( \varepsilon ) \text{ in }   L^{\infty} ( [0,T ]  \times \R^{3} ),
\\   \label{Fake4}
 \sup_{t\in (0,T)} \, \| \nabla v^\eps \|_{L^{2} ( \mathcal{V}_{t,c\eps} \cap \Omega_{t,+} ) } = O( \varepsilon^{-\frac{1}{2}}  ) ,
\end{gather}
and
\begin{gather}
\label{symm}
v^\eps + \widetilde{v^\eps }  = O( \varepsilon ) \text{ in }   L^{\infty} ( [0,T ]  \times \R^{3} ), 
\\ \label{Fake2}
\partial_{t} v^\eps + u^E \cdot \nabla  v^\eps  = O( \varepsilon^{\frac{1}{2}}) \text{ in }C ( [0,T ]  ;  L^2{(\R^d)}),
\end{gather}
\end{Lemma}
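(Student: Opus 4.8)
The plan is to adapt Kato's corrector construction to the moving sheet. Since \eqref{KatoCondition} and \eqref{KatoConditionJump} are inherited by any radius smaller than $c$, we may and do assume $c$ so small that, for every $t\in[0,T]$, the set $\{|\varphi(t,\cdot)|<c\}$ is a tubular neighbourhood of $\Sigma_t$ in which $(s,z):=(\text{coordinates on }\Sigma_t,\ \varphi(t,\cdot))$ are well defined, with $|\nabla\varphi|=1$ by \eqref{cladis}. Fix an even $\chi\in C^\infty_c(\R)$ with $\chi\equiv 1$ near $0$ and $\operatorname{supp}\chi\subset(-1,1)$, put $\chi_\eps(t,x):=\chi(\varphi(t,x)/(c\eps))$, and use the jump $[u^E]=u^E-\widetilde{u^E}$, which is odd in $z$ and whose normal component is $O(\varphi)$ near $\Sigma$ because the normal component of $u^E$ is continuous. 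Choosing a unit tangent field $\hat\tau$ along $\Sigma_t$, define on the tubular neighbourhood the even scalar profile
\[\tau^E(t,s,z):=\tfrac12\int_0^z\big([u^E]\cdot\hat\tau\big)(t,s,z')\,dz',\]
and set, for a suitable fixed sign, $v^\eps:=\pm\nabla^{\perp}(\chi_\eps\tau^E)$ in dimension $2$ (the rotated gradient; in dimension $3$ one takes instead $v^\eps:=\curl(\chi_\eps\,\Theta^\eps)$ for a tangential vector potential $\Theta^\eps$ built analogously), extended by $0$ outside. Then $v^\eps$ is divergence free by construction and supported in $\mathcal{V}_{t,c\eps}$, which is \eqref{support}. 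Near $z=0$, where $\chi_\eps\equiv 1$, one has $v^\eps=\pm\nabla^{\perp}\tau^E$; since $\tau^E$ is continuous across $\Sigma$ and $\partial_z\tau^E=\tfrac12[u^E]\cdot\hat\tau$ there, the jump of $v^\eps$ across $\Sigma$ is exactly (for the right sign) $-(u^E_+-u^E_-)|_{\Sigma}$, so $u^E+v^\eps$ has no jump across $\Sigma$, is smooth on each $\Omega_{t,\pm}$, and therefore lies in $L^2([0,T];H^1(\R^d))$; combined with $u^E\in C([0,T];L^2_\sigma(\R^d))$ — which follows from the assumed weak continuity together with the conserved norm \eqref{EulerBodyStrongEnergy} — this yields \eqref{Fake7}.

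The remaining bounds are dimensional analysis. On $\operatorname{supp}v^\eps$ one has $|\varphi|\le c\eps$, hence $\tau^E=O(\eps)$ and $\chi_\eps\tau^E=O(\eps)$ in $L^\infty$; this function varies on scale $\eps$ in the $z$ direction and on scale $1$ tangentially, so $\nabla(\chi_\eps\tau^E)=O(1)$ (the a priori dangerous term $\tfrac1{c\eps}\chi'\,\tau^E$ being $O(\eps^{-1})\cdot O(\eps)=O(1)$) and $\nabla^2(\chi_\eps\tau^E)=O(\eps^{-1})$ in $L^\infty$; therefore $v^\eps=O(1)$ in $L^\infty([0,T]\times\R^d)$ and $\nabla v^\eps=O(\eps^{-1})$ in $L^\infty$ on each of $\mathcal{V}_{t,c\eps}\cap\Omega_{t,\pm}$. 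Since $\mathcal{V}_{t,c\eps}$ has volume $O(\eps)$ uniformly in $t$, this gives \eqref{Fake1} and \eqref{Fake4}; \eqref{Fake0} and \eqref{FakeImp} are immediate (for the latter, $|\varphi|\le c\eps$ on the support). For \eqref{symm}, $\chi_\eps\tau^E$ is even in $z$, so its rotated gradient has odd tangential component and even normal component; the normal component is $\propto\partial_s(\chi_\eps\tau^E)=O(\eps)$, while adding the odd tangential component to its reflection leaves only curvature and frame corrections of size $O(z)=O(\eps)$, whence $v^\eps+\widetilde{v^\eps}=O(\eps)$ in $L^\infty$.

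The crux is the transport estimate \eqref{Fake2}, and it is here that the eikonal equation \eqref{eikonal} enters. As $v^\eps$ is supported in a set of volume $O(\eps)$ uniformly in $t$, it suffices to show $\partial_tv^\eps+u^E\cdot\nabla v^\eps=O(1)$ in $L^\infty$. Commuting the material derivative past $\nabla^{\perp}$ produces a commutator of the schematic form $(\nabla u^E)\,\nabla(\chi_\eps\tau^E)$, which is $O(1)$ since $\nabla u^E=O(1)$ and $\nabla(\chi_\eps\tau^E)=O(1)$, plus the term $\nabla^{\perp}\!\big((\partial_t+u^E\cdot\nabla)(\chi_\eps\tau^E)\big)$. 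In the latter, differentiating $\chi_\eps$ along the flow yields the factor $\tfrac1{c\eps}\chi'(\varphi/(c\eps))\,(\partial_t\varphi+u^E\cdot\nabla\varphi)$; by \eqref{eikonal} the function $\partial_t\varphi+u^E_\pm\cdot\nabla\varphi$ vanishes on $\Sigma$ and, the fields $u^E_\pm$ being smooth up to $\Sigma$, is $O(\varphi)=O(\eps)$ on $\operatorname{supp}v^\eps$, so that factor is merely $O(1)$; the same mechanism turns the a priori $O(1)$ contribution $(\partial_t\varphi+u^E\cdot\nabla\varphi)\,\partial_z\tau^E$ inside $(\partial_t+u^E\cdot\nabla)\tau^E$ into an $O(\eps)$ term. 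Hence $(\partial_t+u^E\cdot\nabla)(\chi_\eps\tau^E)=O(\eps)$ in $L^\infty$, still varying on scale $\eps$ in $z$, so its rotated gradient is $O(1)$; altogether $\partial_tv^\eps+u^E\cdot\nabla v^\eps=O(1)$ in $L^\infty$ and hence $O(\eps^{1/2})$ in $C([0,T];L^2(\R^d))$. The main obstacle, already apparent above, is to arrange a single corrector $v^\eps$ that is simultaneously divergence free, renders $u^E+v^\eps$ an $H^1$ (hence admissible) test field by exactly cancelling the tangential jump of $u^E$ on $\Sigma$, and is transported by $u^E$ with only an $O(\eps^{1/2})$ error in $L^2$ — the last property resting entirely on the eikonal identity \eqref{eikonal}; the dimension $3$ case is then identical with $\nabla^{\perp}$ replaced by the appropriate curl.
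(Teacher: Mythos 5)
Your construction is essentially the paper's: a divergence-free corrector obtained as $\nabla^\perp$ (resp.\ $\curl$) of an even, $O(\eps)$ potential supported in $\mathcal{V}_{t,c\eps}$ whose normal derivative at $\Sigma_t$ reproduces half the tangential jump of $u^E$ (so that $u^E+v^\eps$ has no jump and \eqref{Fake7} holds), with the eikonal equation \eqref{eikonal} providing the key transport estimate \eqref{Fake2}; the paper merely freezes the jump at its trace on $\Sigma_t$ and uses a profile $\tfrac{\eps}{2}\xi(|\varphi|/\eps)$ with $\xi(0)=0$, $\xi'(0)=1$, where you use the primitive $\tau^E$ times a plateau cutoff, which is an equivalent variant. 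Your verifications of \eqref{support}--\eqref{Fake2} match what the paper leaves as an easy check, so the proposal is correct and follows the same route.
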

\begin{Remark}
Above we have written separately the estimates \eqref{Fake0}-\eqref{Fake4} which are similar to the ones in Kato's original paper  \cite{Tosio} and the estimates \eqref{symm} and \eqref{Fake2} which are  two new requirements  useful in the case of vortex sheets. 
\end{Remark}
\begin{proof}[Proof of  Lemma \ref{LemmaFake}] 
Let  $\xi: [ 0,+\infty) \rightarrow [ 0,+\infty) $ be a smooth cut-off function such that $\xi(0) = 0$,  $\xi'(0) = 1$ and  $\xi(r) = 0$ for $r \geq c$. 
Recall the notation $[\cdot] $ in \eqref{reflexion}. 
Set, for $t$ in $ [0,T ]$,  $x$ in $ \Omega_{t,\pm} $ and $\eps $ in $(0,1)$, 
\begin{gather}
\label{2D}  v^\eps  := -  \nabla^\perp \big(  \frac{\varepsilon }{2} \xi(\pm \frac{\varphi }{\eps} ) [ u^E \cdot  \nabla^\perp \varphi  ]\vert_{\varphi = 0}  \big)  \quad   \text{ if } d=2, 
\\ \label{3D}  v^\eps  :=  \curl \big(  \frac{\varepsilon }{2}  \xi(\pm \frac{\varphi }{\eps} ) [ u^E  ]\vert_{\varphi = 0} \times \nabla \varphi \big) \quad   \text{ if } d=3.
 \end{gather}
Then we easily check that the family $(v^\eps)_{\eps \in (0,1)}$ is in  $C ( [0,T ]  ;  L^2_\sigma {(\R^d)})$  and satisfies  \eqref{support}-\eqref{Fake2}. (Observe  in particular that we use \eqref{eikonal} to obtain \eqref{Fake2}).
\end{proof}
Let us display here a remark which could be useful to get an insight of the whole strategy, with some anticipation on the rest of 
 the proof of Theorem \ref{ThKato}. 
%
\begin{Remark}
Observe that above the $v^\eps$ are constructed such that 
  $[ u^E + v^\eps  ] = 0$ on $\Sigma$ but without any condition on  $[ \nabla ( u^E  + v^\eps )  ]$ despite that  it is expected that a  nice physical approximation  $u^\eps_a $  of $u^\eps $ should satisfy $[ \nabla u^\eps_a  ]=0$ on $\Sigma$. 
However in the next section we will combine Lemma \ref{lemmaGG} and  Lemma \ref{LemmaFake} and we will estimate  the right hand side of  
  \eqref{square2} without any further integration by parts of the diffusive terms $R_{(ii)} $ and $ R_{(v)}$ so that the lack of information regarding   $[ \nabla ( u^E  + v^\eps )  ]$ at the interface will not be a problem. 
 We therefore spare  a degree of freedom which is used in  Lemma \ref{LemmaFake} to insure the almost oddness of the transition layer stated in \eqref{symm}. 
  Such a condition has no reason to be physical but  will be crucial in the treatment of the convective term  $R_{(iv)} $ in Section \ref{Preuve3}. 
\end{Remark}
\section{End of the proof of Theorem \ref{ThKato}}
\label{Preuve3} 

We now go back to the proof of  Theorem \ref{ThKato}. 
We apply Lemma \ref{lemmaGG} with $v^\eps$ instead of $v$ 
  where  $(v^\eps)$ is a family as in Lemma \ref{LemmaFake}. Indeed a density argument, \eqref{Fake7} and the piecewise smoothness of $v^\eps$ allows us to take 
  $u^\eps + v^\eps$ as a test function $\varphi$ in \eqref{wNS}. 
(We now stop dropping  the index ${\eps}$ of 
  $ u^{\eps}_{0} $  and $ u^{\eps}$ but we will keep the notations 
 $ R_{(i)}$, \ldots,  $R_{(v)} $, without any extra index, being understood that these terms depend on ${\eps}$).
 We are now going to bound the various terms in the right hand side of  \eqref{square2}. 
 
 Since the Navier-Stokes  initial data  $u^\eps _{0} $ converges to the Euler one in $L^2 {(\R^d)} $ as the viscosity  $\eps$ goes to $0$, it is bounded, and so is the corresponding 
  Navier-Stokes solution $u^\eps $ for almost every time, according to the energy estimate  \eqref{NSBodyWeakEnergy}. 
  Therefore, by Cauchy-Schwarz' inequality and  \eqref{Fake1},  for almost every time,
\begin{gather}
\label{titrucs}
\vert   (u^\eps ,   v^\eps )_{L^2{(\R^d)}}  \vert  \leq C \eps^{\frac{1}{2}}    \text{ and }  \vert  (u^\eps _{0},   v^\eps  |_{t=0})_{L^2{(\R^d)}} \vert  \leq C \eps^{\frac{1}{2}}  .
 \end{gather}
Let us warn the reader that we will use the same notation $C$ for various constants which may change from line to line, but always independent of $ \eps$. 

Using that the Euler solution is piecewise smooth, we arrive at 
\begin{gather}
\label{esti(i)}
 \vert   R_{(i)} (t)  \vert  
  \leq  C   \|  u^\eps (t,\cdot) -  u^E  (t, \cdot)   \|_{L^2{(\R^d)}}^{2}  .
 \end{gather}
By Cauchy-Schwarz' inequality, 
\begin{gather}
\label{esti(ii)}
   \vert   R_{(ii)}  \vert    \leq   C  \eps    \|    \nabla u^\eps    \|_{L^2{(\R^d)}}  
\end{gather}
Using again Cauchy-Schwarz' inequality and  \eqref{Fake2}, we arrive at 
\begin{gather}
   \label{esti(iii)}
    \vert   R_{(iii)}  \vert    \leq   C \varepsilon^{\frac{1}{2}} \|   u^\eps   \|_{L^2 {(\R^d)}}  . 
    \end{gather}
   Let us continue with estimating  $R_{(v)} (t) $, keeping the best for the end. 
Using again Cauchy-Schwarz' inequality,   \eqref{support}  and  \eqref{Fake2}, we arrive at 
\begin{gather}
   \label{esti(v)}
     \vert   R_{(v)} (t)  \vert    \leq C \eps^{\frac{1}{2}}   \|      \nabla u^\eps    \|_{L^2 {( \mathcal{V}_{t,c\eps} )}} .
\end{gather}

It remains to deal with $R_{(iv)} (t) $. This is where the treatment is quite different from the one performed in the traditional setting of boundary layers along an impermeable wall. 
By a change of variable (observe that for $\eps$ small enough, the reflexion across $\Sigma_t$, introduced at the beginning of Section \ref{sec-MR}, is well-defined on the support of $v^\eps$)  and  \eqref{symm}, 
\begin{align*}
 R_{(iv)} &=   R_{(iv),a}  + R_{(iv),b} + R_{(iv),c} +R_{(iv),d} ,
   \end{align*}
with 
\begin{gather*}
 R_{(iv),a} :=   -  \, \int_{\Omega_{t,+} }   v^\eps  \cdot (   [ u^\eps ]   \cdot \nabla u^\eps  )    dx, \quad  
\\   R_{(iv),b} :=   \, \int_{\Omega_{t,+} }   v^\eps  \cdot ( [u^E]  \cdot \nabla u^\eps   )    dx , \quad
  \\   R_{(iv),c} :=  - \, \int_{\Omega_{t,+} }   v^\eps  \cdot ((\widetilde{u^\eps }-\widetilde{u^E}) \cdot [ \nabla u^\eps ] )    dx ,
   \end{gather*}
 and
\begin{gather*}
  R_{(iv),d} := - \, \int_{\Omega_{t,+} }  ( v^\eps + \widetilde{v^\eps} ) \cdot ((\widetilde{u^\eps }-\widetilde{u^E}) \cdot \widetilde{ \nabla u^\eps } )    dx .
 \end{gather*}
\begin{itemize}
\item  By   \eqref{support},  \eqref{FakeImp} and  Cauchy-Schwarz' inequality,
\begin{gather*}
   \vert   R_{(iv),a} (t)  \vert    \leq    \varepsilon    \| \varphi^{-1} \,  [ u^\eps  (t,\cdot)]    \|_{L^2{(\mathcal{V}_{t,c\eps} \cap\Omega_{t,+} )}} \,  
     \|  \nabla u^\eps    \|_{L^2{(\mathcal{V}_{t,c\eps} \cap\Omega_{t,+}  )}} \,    
 \end{gather*}
Moreover by Hardy's inequality, 
$$ \| \varphi^{-1} \,  [ u^\eps (t,\cdot)]    \|_{L^2{(\mathcal{V}_{t,c\eps} \cap\Omega_{t,+}  )}}  \leq C  \| \nabla  u^\eps   \|_{L^2{( \mathcal{V}_{t,c\eps} )}}  ,$$
so that 
\begin{gather}
   \label{esti(iva)}
   \vert   R_{(iv),a} (t)  \vert    \leq  C  \varepsilon        \|  \nabla u^\eps    \|^2_{L^2{(\mathcal{V}_{t,c\eps}   )}}      .
 \end{gather}
\item  By  \eqref{support}, \eqref{Fake1} and Cauchy-Schwarz' inequality, 
\begin{gather}
   \label{esti(ivb)}
   \vert   R_{(iv),b} (t)  \vert    \leq  C  \varepsilon^{\frac{1}{2}}        \|  \nabla u^\eps    \|_{L^2{(\mathcal{V}_{t,c\eps} )}} .
 \end{gather}
\item  By  \eqref{support}, \eqref{Fake0} and Cauchy-Schwarz' inequality, 
\begin{gather}
   \label{esti(ivc)}
   \vert   R_{(iv),(c)} (t)  \vert    \leq  C  \| u^\eps -u^E  \|_{L^2{(\R^d)}}  \|  [ \nabla u^\eps ]  \|_{L^2{( \mathcal{V}_{t,c\eps} \cap\Omega_{t,+})}} .
 \end{gather}
\item  Finally, by \eqref{support}, \eqref{symm} and Cauchy-Schwarz' inequality,  
\begin{gather}
   \label{esti(ivd)}
   \vert   R_{(iv),(d)} (t)  \vert    \leq  C  \varepsilon   \| u^\eps -u^E  \|_{L^2{(\R^d)}}   \|   \nabla u^\eps   \|_{L^2{( \mathcal{V}_{t,c\eps} })} .
 \end{gather}
\end{itemize}
Gathering   \eqref{esti(iva)}, \eqref{esti(ivb)}, \eqref{esti(ivc)}  and \eqref{esti(ivd)}  we arrive at 
\begin{gather}
   \label{esti(iv)}
   \vert   R_{(iv)}  \vert    \leq   C  \| u^\eps -u^E  \|^2_{L^2{(\R^d)}}  + C  \varepsilon       \|  \nabla u^\eps    \|^2_{L^2{(\mathcal{V}_{t,c\eps}   )}}  
\\ \nonumber    +  C  \varepsilon^{\frac{1}{2}}        \|  \nabla u^\eps    \|_{L^2{(\mathcal{V}_{t,c\eps} )}} + C \|  [ \nabla u^\eps ]  \|^2_{L^2{( \mathcal{V}_{t,c\eps} \cap\Omega_{t,+})}}   .
  \end{gather}

Using \eqref{titrucs}, \eqref{esti(i)}, \eqref{esti(ii)},  \eqref{esti(iii)},  \eqref{esti(v)} and  \eqref{esti(iv)}
to bound the various terms in the right-hand side of \eqref{square2}
we arrive at 
\begin{gather} \nonumber
 \|  u^\eps (t,\cdot) -  u^E  (t, \cdot)   \|_{L^2{(\R^d)}}^{2}   \leq \| u^\eps _{0} - u^E_{0}  \|_{L^2{(\R^d)}}^{2}  + C    \int_{0}^{t}   \| u^\eps -u^E  \|^2_{L^2{(\R^d)}} ds  
 \\  \nonumber +   C \eps^{\frac{1}{2}}   +   C  \varepsilon^{\frac{1}{2}} \int_{0}^{t}    \|   u^\eps   \|_{L^2 {(\R^d)}}  ds 
 + C  \eps  \int_{0}^{t}  \|    \nabla u^\eps    \|_{L^2{(\R^d)}}  ds
 \\  \label{GronW}   + C \int_{0}^{t}    \Big(    \eps^{\frac{1}{2}}   \|      \nabla u^\eps    \|_{L^2 {( \mathcal{V}_{t,c\eps} )}} 
 + \varepsilon       \|  \nabla u^\eps    \|^2_{L^2{(\mathcal{V}_{t,c\eps} )}} 
+  \|  [ \nabla u^\eps ]  \|^2_{L^2{( \mathcal{V}_{t,c\eps} \cap\Omega_{t,+})}}  \Big) ds       .
\end{gather}
Thanks to \eqref{NSBodyWeakEnergy} and Cauchy-Schwarz' inequality, the terms in the second line above converge to $0$ as the viscosity  $\eps$ goes to $0$. 
The terms in the third line above also converge to $0$ thanks to Conditions \eqref{KatoCondition} and \eqref{KatoConditionJump}.
Therefore, by Gronwall's lemma, we get the convergence stated in  \eqref{Convergence} and the proof of Theorem \ref{ThKato} is completed.

%

\section*{Acknowledgements}

The author  thanks  the Agence Nationale de la Recherche, Project DYFICOLTI, grant ANR-13-BS01-0003-01, Project IFSMACS, grant ANR-15-CE40-0010  and Project  BORDS, grant  ANR-16-CE40-0027-01.

%
%

\end{document}